\newtheorem{theo+}              {Theorem}           [section]
\newtheorem{prop+}  [theo+]     {Proposition}
\newtheorem{coro+}  [theo+]     {Corollary}
\newtheorem{lemm+}  [theo+]     {Lemma}
\newtheorem{exam+}  [theo+]     {Example}
\newtheorem{rema+}  [theo+]     {Remark}
\newtheorem{defi+}  [theo+]     {Definition}
\newenvironment{theorem}{\begin{theo+}}{\end{theo+}}
\newenvironment{corollary}{\begin{coro+}}{\end{coro+}}
\theoremstyle{plain} \theoremstyle{remark}
\newtheorem{remark}{Remark}
\newtheorem{example}{Example}
\newtheorem*{ack}{\bf Acknowledgments}
\def \r{\mbox{${\mathbb R}$}}
\def\E{/\kern-1.0em \equiv }
\author{Mehmet Akif AKYOL$^{*}$}
\address{\hskip-\parindent
Department of Mathematics, Faculty of Science and Arts, Bing\"ol University, Bing\"ol 12000, Turkey}
\email{mehmetakifakyol@bingol.edu.tr}
\author{Ye-Lin Ou$^{**}$}
\address{\hskip-\parindent
Department of Mathematics, Texas A $\&$ M University-Commerce,
\newline Commerce,  TX 75429,  USA} 
\email{yelin.ou@tamuc.edu}
\thanks{$^{*}$The first author would like to thank The Scientific and Technological Research
Council of Turkey (T\"UBITAK) for  a scholarship (No.1059B19160050) which allowed him to visit  the Department of Mathematics at Texas A $\&$ M University-Commerce as a Visiting Scholar during the period of 09/2017-05/2018. He is also grateful to Texas A $\&$ M University-Commerce and the Department of Mathematics for the hospitality he received during his visit there where this work was done. 
\newline\indent $^{**}$The second author is supported by a grant from the Simons Foundation ($\#427231$, Ye-Lin Ou)}
\date{05/08/2018}
\begin{document}
\title[Biharmonic Riemannian submersions]{Biharmonic Riemannian submersions}

\subjclass{58E20, 53C43} \keywords{Biharmonic maps, Riemannian
submersions, biharmonic Riemannian submersions, warped product, twisted product.}

\maketitle

\section*{Abstract} 
\begin{quote}
{\footnotesize In this paper, we study biharmonic Riemannian submersions. We first derive bitension field of a general Riemannian submersion, we then use it to obtain biharmonic equations for Riemannian submersions with $1$-dimensional fibers and Riemannian submersions with basic mean curvature vector fields of fibers. These are used to construct examples of proper biharmonic Riemannian submersions with $1$-dimensional fibers and to characterize warped products whose projections onto the first factor are biharmonic Riemannian submersions. }

\end{quote}

\maketitle

\section{Introduction}

 A map $\phi:(M, g)\longrightarrow (N, h)$ between
Riemannian manifolds is a {\em biharmonic map} if
$\phi|\Omega$ is a critical point of the bienergy functional
\begin{equation}\nonumber
E_{2}\left(\phi,\Omega \right)= \frac{1}{2} {\int}_{\Omega}
\left|\tau(\phi) \right|^{2}{\rm d}v_g
\end{equation}
for every compact subset $\Omega$ of $M$, where
$\tau(\phi)={\rm Trace}_{g}\nabla {\rm d} \phi$ is the
tension field of $\phi$. Using the first variational formula
(see \cite{Ji}) one obtains the biharmonic map equation
\begin{equation}\label{BI1}
\tau^{2}(\phi):=-\triangle^{\phi}(\tau(\phi)) - {\rm
Trace}_{g} R^{N}({\rm d}\phi, \tau(\phi)){\rm d}\phi
=0,
\end{equation}
where
\begin{equation}\notag
\triangle^{\phi}=-{\rm Trace}_{g}(\nabla^{\phi})^{2}= -{\rm
Trace}_{g}(\nabla^{\phi}\nabla^{\phi}-\nabla^{\phi}_{\nabla^{M}})
\end{equation}
is the Laplacian on sections of the pull-back bundle $\phi^{-1}
TN$ and $R^{N}$ is the curvature operator of $(N, h)$ defined by
$$R^{N}(X,Y)Z=
[\nabla^{N}_{X},\nabla^{N}_{Y}]Z-\nabla^{N}_{[X,Y]}Z.$$

Biharmonic Riemannian submersions were first studied by Oniciuc in \cite{On} where he proved that if a Riemannian submersion $\phi: (M^{m}, g)\longrightarrow (N^n, h)$ has basic tension field, i.e.,  $\tau(\phi)(p)=\tau(\phi)(q)$ provided $\phi(p)=\phi(q)$, (or the tension field is constant along the fibers), then
\begin{align*}
\tau_2(\phi)={\rm Tr}_h(\nabla^N)^2\tau(\phi)+\nabla^N_{\tau(\phi)}\tau(\phi)+{\rm Ric}^N(\tau(\phi)).
\end{align*}
 In particular, it was proved in  \cite{On} that if the tension field $\tau(\phi)$ of a Riemannian submersion $\phi: (M^{m}, g)\longrightarrow (N^n, h)$ is a unitary Killing vector field on $N$, then it is a biharmonic map.\\

The biharmonic map equations for a more general class of maps, i.e., conformal submersions (or even more generally, horizontally weakly conformal maps), were obtained independently in \cite{BFO} and \cite{LO}. As an application of these equations, a family of biharmonic Riemannian submersions were constructed in \cite{LO}. Later in \cite{WO}, biharmonic Riemannian submersions from $3$-dimensional spaces with $1$-dimensional fibers were studied using the so-called integrability data. It was proved in \cite{WO} that  a Riemannian submersion from a $3$-dimensional space form onto a surface is biharmonic if and only if it is harmonic, i.e., the fibers have to be totally geodesic.\\

In a recent paper \cite{GO}, the authors found many examples of biharmonic Riemannian submersions in their study of generalized harmonic morphisms which are maps bewteen Riemannian manifolds that pull back local harmonic functions to local  biharmonic functions. 
\begin{example}(see \cite{GO} for details)
The map $\phi:\r^4\setminus\{(0, 0, 0, x): x\in \r\}=M^4\longrightarrow \r^2$ with $\phi(x_1,\cdots, x_4)=(\sqrt{x_1^2+x_2^2+x_3^2\,}, x_4)$ is a proper biharmonic Riemannian submersion with the mean curvature vector field of fibers being  basic vector field. In fact, using cylindrical coordinates $(r, \theta, \varphi, x_4)$ the Riemannian submersion can be expressed as $\phi (r, \theta, \varphi, x_4)=(r, x_4)$, and the tension field of the Riemannian submersion is $\tau(\phi)={\rm d}\phi(\frac{2}{r}\frac{\partial}{\partial r})$ and the mean curvature vector field of the fibers is $\mu=-\frac{1}{r}\frac{\partial}{\partial r}$.
\end{example}
\begin{example}(see \cite{GO} for details)
 Let $\mathbb{R}^2_{+}=\{ (x,y)\in \r^2: y>0\}$ denote the upper-half plane and $C$ be a positive constant. Then the Riemannian submersion defined by the projection of the warped product
 \begin{align}\notag
\pi  : ( \mathbb{R}^2_{+} \times \mathbb{R} , dx^2 +
dy^2+Cy^4 dz^2) &\to (\mathbb{R}^2_{+} ,dx^2 + dy^2) \\\notag
\pi(x,y,z) =(x,y),
\end{align}
is a proper biharmonic Riemannian submersion with the tension field $\tau(\pi)=\frac{2}{y}{\rm d}\pi(\frac{\partial}{\partial y})$ and the mean curvature vector field of fibers $\mu=-\frac{2}{y}\frac{\partial}{\partial y}$, which is basic.
\end{example}

Note that there are many Riemannian submersions ( e.g., those given by the projections of a twisted product onto its first factor studied in Corollary \ref{Twisted}) whose mean curvature vector fields are not basic.

In this paper, we study biharmonicity of a general  Riemannian submersion. We first derive the bitension field of a general Riemannian submersion, we then use it to obtain biharmonic equations for Riemannian submersions with $1$-dimensional fibers generalizing a result in \cite{WO} and Riemannian submersions with basic mean curvature vector fields of fibers. These are used to construct examples of proper biharmonic Riemannian submersions with $1$-dimensional fibers and to characterize warped products whose projections onto the first factor are biharmonic Riemannian submersions.

\section{Biharmonic equations for Riemannian submersions}
Biharmonic equations for Riemannian submersions can be obtained as a special case of biharmonic equations for conformal submersion obtained in \cite{BFO} and \cite{LO}. However, the equation obtained this way seems to be hard to apply. For the convenience of applications, we first derive the following form of the  bitension field of a Riemannian submersion and then use it to obtain biharmonic equation for Riemannian submersions with $1$-dimensional fibers or with  basic mean curvature vector field of fibers.\\

\begin{theorem}\label{MT}
Let $\phi: (M^m,g)\longrightarrow (N^n,h)$ be a Riemannian submersion between Riemannian manifolds, then the bitension field of $\phi$ is given by
\begin{eqnarray}\notag
\tau_{2}(\phi)&=&-(m-n){\rm d}\phi \Big(  \sum_{i=1}^n\big\{\nabla^{M}_{e_{i}}(\nabla^{M}_{e_{i}}\mu)^{\mathcal{H}}- \nabla^{M}_{(\nabla^{M}_{e_i}e_i)^{\mathcal{H}}}\mu+[\mu, (\nabla^{M}_{e_i}e_i)^{\mathcal{V}}]\big\}\\\notag
&&+\sum_{s=n+1}^m\{ [[\mu,e_s],e_s] +[\mu,(\nabla^{M}_{e_s}e_s)^{\mathcal{V}}]\}-(m-n)\nabla^{M}_{\mu}\mu \Big)\\\label{EQ1}
&&-(m-n){\rm Ricci}^N(\rm d\phi(\mu)),
\end{eqnarray}
where $\{e_{i},e_{s}\}_{i=1,\dots,n, s=1,\dots,m-n}$ is a local orthonormal frame on the total space $M$ with $e_{i}$ horizontal and $e_{s}$ vertical,  $\mu = \tfrac{1}{m-n} \sum_{s=1}^{m-n} (\nabla_{e_{s}} e_{s})^{\mathcal{H}}$
is  the mean curvature vector field of the fibers of the Riemannian submersion.
\end{theorem}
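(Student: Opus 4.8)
The plan is to start from the general bitension formula \eqref{BI1} applied to $\sigma:=\tau(\phi)$ and to exploit the fact that for a Riemannian submersion the tension field is governed by the mean curvature of the fibers. First I would record the identity $\tau(\phi)=-(m-n)\,{\rm d}\phi(\mu)$: evaluating $\tau(\phi)={\rm Trace}_g\nabla {\rm d}\phi$ in the adapted frame $\{e_i,e_s\}$, the horizontal pairs contribute nothing because $\nabla {\rm d}\phi(e_i,e_j)=0$ for a Riemannian submersion, whereas each vertical pair contributes $-{\rm d}\phi((\nabla^{M}_{e_s}e_s)^{\mathcal H})$, and summing gives $-(m-n){\rm d}\phi(\mu)$ by the definition of $\mu$; this is consistent with the two worked examples in the Introduction.

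The technical heart is a composition rule for the pull-back connection. Since $\nabla {\rm d}\phi$ is tensorial I may evaluate it on basic extensions, which yields $\nabla {\rm d}\phi(X,Y)=0$ for horizontal $X,Y$ and $\nabla {\rm d}\phi(U,W)=-{\rm d}\phi((\nabla^{M}_W U)^{\mathcal H})$ for vertical $U$ and horizontal $W$. Substituting these into $\nabla^{\phi}_Z{\rm d}\phi(W)={\rm d}\phi(\nabla^{M}_Z W)+\nabla {\rm d}\phi(Z,W)$ and splitting $Z=Z^{\mathcal H}+Z^{\mathcal V}$, I expect to obtain the key formula $\nabla^{\phi}_Z{\rm d}\phi(W)={\rm d}\phi\big(\nabla^{M}_{Z^{\mathcal H}}W-[W,Z^{\mathcal V}]\big)$ for every horizontal $W$, the bracket arising from $(\nabla^{M}_{Z^{\mathcal V}}W)^{\mathcal H}-(\nabla^{M}_W Z^{\mathcal V})^{\mathcal H}=[Z^{\mathcal V},W]^{\mathcal H}$. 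The appearance of $[W,Z^{\mathcal V}]$ rather than a naive horizontal projection is exactly what forces the bracket terms in \eqref{EQ1}, and pinning down this correction is the step I expect to be most delicate, precisely because $\mu$ need not be basic.

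With this rule the computation splits cleanly. The curvature term $-{\rm Trace}_g R^{N}({\rm d}\phi,\sigma){\rm d}\phi$ receives contributions only from the horizontal directions, the vertical ones being annihilated by ${\rm d}\phi$; since $\{{\rm d}\phi(e_i)\}$ is orthonormal this collapses to $-(m-n){\rm Ricci}^N({\rm d}\phi(\mu))$. For the rough-Laplacian term, writing $-\Delta^{\phi}\sigma=-(m-n)\sum_a(\nabla^{\phi}_{e_a}\nabla^{\phi}_{e_a}{\rm d}\phi(\mu)-\nabla^{\phi}_{\nabla^{M}_{e_a}e_a}{\rm d}\phi(\mu))$, I would apply the composition rule twice in each summand. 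The horizontal indices $e_i$ give ${\rm d}\phi(\nabla^{M}_{e_i}(\nabla^{M}_{e_i}\mu)^{\mathcal H}-\nabla^{M}_{(\nabla^{M}_{e_i}e_i)^{\mathcal H}}\mu+[\mu,(\nabla^{M}_{e_i}e_i)^{\mathcal V}])$, matching the first sum. For each vertical $e_s$ the iteration produces a double bracket, where I would use that the bracket of two vertical fields is vertical (hence killed by ${\rm d}\phi$) to replace $[[\mu,e_s]^{\mathcal H},e_s]$ by $[[\mu,e_s],e_s]$, together with $-\nabla^{M}_{(\nabla^{M}_{e_s}e_s)^{\mathcal H}}\mu$ from the connection correction and the surviving $[\mu,(\nabla^{M}_{e_s}e_s)^{\mathcal V}]$.

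Finally I would collect terms. The only piece not yet visibly of the stated form is $-\sum_{s}\nabla^{M}_{(\nabla^{M}_{e_s}e_s)^{\mathcal H}}\mu$; here I would invoke $\sum_{s}(\nabla^{M}_{e_s}e_s)^{\mathcal H}=(m-n)\mu$ from the definition of the mean curvature to rewrite it as $-(m-n)\nabla^{M}_\mu\mu$, the last summand inside the parentheses of \eqref{EQ1}. Assembling the horizontal sum, the vertical sum, this $\nabla^{M}_\mu\mu$ term, and the Ricci term then gives the asserted expression, the overall factor $-(m-n)$ being produced by the normalization $\sigma=-(m-n){\rm d}\phi(\mu)$.
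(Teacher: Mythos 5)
Your proposal is correct and follows essentially the same route as the paper: it computes $\tau(\phi)=-(m-n)\,{\rm d}\phi(\mu)$, evaluates the rough Laplacian of ${\rm d}\phi(\mu)$ term by term in an adapted frame using the standard second–fundamental–form identities for a Riemannian submersion (your ``composition rule'' is just these identities bundled into one formula, and it correctly produces the bracket corrections $[\mu,(\nabla^M_{e_a}e_a)^{\mathcal V}]$ and $[[\mu,e_s],e_s]$), and collapses the curvature trace to $-(m-n){\rm Ricci}^N({\rm d}\phi(\mu))$. The final rewriting $-\sum_s\nabla^M_{(\nabla^M_{e_s}e_s)^{\mathcal H}}\mu=-(m-n)\nabla^M_\mu\mu$ matches the paper's assembly as well.
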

\begin{proof}

It is well known (see e.g., \cite{BW1}, Lemma 4.5.1) that for a Riemannian submersion $\phi: (M^m,g)\longrightarrow (N^n,h)$ and any horizontal vector fields  $X,Y$ and vertical vector fields $V, W$, we have 
\begin{equation}\label{cases1}
\begin{cases}
(\nabla{\rm d }\phi)(X,Y)=0,\\
(\nabla{\rm d }\phi)(V,W)=-\rm d\phi(\nabla^{M}_{V}W),\\
(\nabla{\rm d }\phi)(X,V)=-\rm d\phi(\nabla^{M}_{X}V),\\
\end{cases}
\end{equation}
for all $X, Y \in \Gamma(\mathcal{H})$ and $V, W \in \Gamma(\mathcal{V})$.\\

One can easily check that  the tension field of the Riemannian submersion is given by  
\begin{equation}
\tau(\phi) = -(m-n)d\phi(\mu).
\end{equation}

By the definition of the second fundamental form of the map $\phi$ and (\ref{cases1}) we have
\begin{eqnarray}\notag
\nabla^{\phi}_{e_{i}} d\phi(\mu) &=& d\phi(\nabla^{M}_{e_{i}}\mu)=d\phi\big((\nabla^{M}_{e_{i}}\mu)^{\mathcal{H}}\big),\\ \label{GD1}
\nabla^{\phi}_{e_{i}} \nabla^{\phi}_{e_{i}} d\phi(\mu) &=&\rm d\phi(\nabla^{M}_{e_{i}}(\nabla^{M}_{e_{i}}\mu)^{\mathcal{H}}),\\\label{GD2}
-\nabla^{\phi}_{\nabla^{M}_{e_i}e_i} d\phi(\mu) &=&-\rm d\phi(\nabla^{M}_{(\nabla^{M}_{e_i}e_i)^{\mathcal{H}}}\mu)+d\phi([\mu, (\nabla^{M}_{e_i}e_i)^{\mathcal{V}}]).
\end{eqnarray}
For vertical vector field $e_s$  on $M$, since $\mu$ is horizontal, we have
\begin{align}\label{GD3}
 \nabla^{\phi}_{e_{s}} d\phi(\mu)=-d\phi([\mu,e_s]).
\end{align}
Using (\ref{GD3}) and (\ref{cases1}), we have
\begin{align}\label{GD4}
\nabla^{\phi}_{e_s}\nabla^{\phi}_{e_{s}} d\phi(\mu)&=-\nabla^{\phi}_{e_s}d\phi([\mu,e_s])=d\phi([[\mu,e_s],e_s]).
\end{align}
Using (\ref{cases1}) again, we obtain
\begin{align}\label{GD5}
-\nabla^{\phi}_{\nabla^{M}_{e_s}e_s} d\phi(\mu)&=-\rm d\phi(\nabla^{M}_{(\nabla^{M}_{e_s}e_s)^{\mathcal{H}}}\mu)+d\phi([\mu,(\nabla^{M}_{e_s}e_s)^{\mathcal{V}}]).
\end{align}
Finally, the curvature term in the bitension field formula for a Riemannian submersion becomes
\begin{align}\notag
-\sum_{i=1}^m {\rm R}^{N}(\rm d\phi(e_i), \tau(\phi))\rm d\phi(e_i)&=(m-n)\sum_{i=1}^n R^{N}(\rm d\phi(e_i), \rm d\phi(\mu))\rm d\phi(e_i)\\\label{Ric}
&=-(m-n){\rm Ricci}^N(\rm d\phi(\mu)).   
\end{align}
Substituting (\ref{GD1}), (\ref{GD2}), (\ref{GD4}), (\ref{GD5}), and (\ref{Ric}) into the bitension field formula we have
\begin{eqnarray}\notag
\tau_{2}(\phi)&=&-(m-n){\rm d}\phi \Big(  \sum_{i=1}^n\big\{ \nabla^{M}_{e_{i}}(\nabla^{M}_{e_{i}}\mu)^{\mathcal{H}} 
- \nabla^{M}_{(\nabla^{M}_{e_i}e_i)^{\mathcal{H}}}\mu+[\mu, (\nabla^{M}_{e_i}e_i)^{\mathcal{V}}]\big\}\\\notag
&&+\sum_{s=n+1}^m\{ [[\mu,e_s],e_s] +[\mu,(\nabla^{M}_{e_s}e_s)^{\mathcal{V}}]\}-(m-n)\nabla^{M}_{\mu}\mu \Big)\\\label{EQ1}
&&-(m-n){\rm Ricci}^N(\rm d\phi(\mu)),
\end{eqnarray}
from which the theorem follows.
\end{proof}

\begin{corollary}\label{Co1}
Let $\phi: (M^m,g)\longrightarrow (N^n,h)$ be a Riemannian submersion with basic mean curvature vector field of the fibers. Then it is biharmonic if and only if
\begin{eqnarray}\notag
&&{\rm d}\phi \Big(  \sum_{i=1}^n\big\{\nabla^{M}_{e_{i}}(\nabla^{M}_{e_{i}}\mu)^{\mathcal{H}}- \nabla^{M}_{(\nabla^{M}_{e_i}e_i)^{\mathcal{H}}}\mu\big\}
-(m-n)\nabla^{M}_{\mu}\mu \Big)+{\rm Ricci}^N(\rm d\phi(\mu))=0,
\end{eqnarray}
where $\{e_{i},e_{s}\}_{i=1,\dots,n, s=1,\dots,m-n}$ is a local orthonormal frame on the total space $M$ with $e_{i}$ horizontal and $e_{s}$ vertical,  $\mu = \tfrac{1}{m-n} \sum_{s=1}^{m-n} (\nabla_{e_{s}} e_{s})^{\mathcal{H}}$
is  the mean curvature of the fibers of the Riemannian submersion.
\end{corollary}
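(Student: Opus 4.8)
The plan is to obtain the corollary directly from Theorem \ref{MT} by showing that the hypothesis that $\mu$ is basic forces the images under ${\rm d}\phi$ of three of the terms appearing in \eqref{EQ1} to vanish, leaving exactly the asserted equation. Concretely, comparing the biharmonic condition $\tau_2(\phi)=0$ with \eqref{EQ1} and dividing through by the nonzero factor $-(m-n)$, it suffices to prove that
$${\rm d}\phi\big([\mu,(\nabla^M_{e_i}e_i)^{\mathcal V}]\big)=0,\quad {\rm d}\phi\big([[\mu,e_s],e_s]\big)=0,\quad {\rm d}\phi\big([\mu,(\nabla^M_{e_s}e_s)^{\mathcal V}]\big)=0$$
for all relevant indices, since these are precisely the terms by which the general formula exceeds the corollary's equation.

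The key input is the standard characterization of basic vector fields. As $\mu$ is horizontal and projectable, it is $\phi$-related to a vector field $\bar\mu$ on $N$, and consequently $[\mu,V]$ is $\phi$-related to $[\bar\mu,0]=0$ for every vertical field $V$; that is, $[\mu,V]\in\Gamma(\mathcal V)=\ker{\rm d}\phi$. Applying this with $V=(\nabla^M_{e_i}e_i)^{\mathcal V}$ and with $V=(\nabla^M_{e_s}e_s)^{\mathcal V}$ immediately kills the first and third brackets above. For the middle term I would first note that $[\mu,e_s]\in\Gamma(\mathcal V)$ by the same property (as $e_s$ is vertical), and then invoke the integrability of the vertical distribution — the fibers being its integral submanifolds — to conclude that $[[\mu,e_s],e_s]$, being a bracket of two vertical fields, is again vertical and hence lies in $\ker{\rm d}\phi$.

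Substituting these three vanishings into \eqref{EQ1} collapses the bitension field to
$$\tau_2(\phi)=-(m-n)\,{\rm d}\phi\Big(\sum_{i=1}^n\big\{\nabla^M_{e_i}(\nabla^M_{e_i}\mu)^{\mathcal H}-\nabla^M_{(\nabla^M_{e_i}e_i)^{\mathcal H}}\mu\big\}-(m-n)\nabla^M_\mu\mu\Big)-(m-n)\,{\rm Ricci}^N({\rm d}\phi(\mu)),$$
and setting $\tau_2(\phi)=0$ and cancelling $-(m-n)$ yields the stated equation. I do not expect a genuine obstacle: the only nonroutine point is correctly deploying the two bracket facts, and in particular making sure that for the doubly-iterated bracket one uses the integrability of $\mathcal V$ rather than attempting to reuse the projectability property (the latter would not apply, since $[\mu,e_s]$ need not be horizontal). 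Everything else is a direct substitution into the already-established identity \eqref{EQ1}.
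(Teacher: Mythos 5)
Your proposal is correct and follows essentially the same route as the paper: both reduce to Theorem \ref{MT} and kill the three bracket terms using the fact that the bracket of a basic field with a vertical field is vertical (the paper cites this from O'Neill and leaves the rest implicit). Your explicit treatment of the iterated bracket $[[\mu,e_s],e_s]$ via integrability of the vertical distribution is a detail the paper glosses over, but it is the right justification and not a departure in method.
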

\begin{proof}
It is well known (see e.g., \cite{ON}) that for a basic vector field $X$ and a vertical vector field $V$ of a Riemannian submersion, $[X, V]$ is vertical and hence $\rm d \phi([X, V])=0$. From this and Theorem \ref{MT} we obtain the corollary.
\end{proof}

Notice that for any basic vector fields $X, Y$ of a Riemannian submersion $\phi:(M^m,g)\longrightarrow (N^n,h)$, i.e., horizontal vector fields that are $\phi$-related to vector fields $\bar{X}, \bar{Y}$ which means ${\rm d}\phi(X)=\bar{X}$ and ${\rm d}\phi(Y)=\bar{Y}$. In this case, it is well known (see e.g., \cite{ON}) that $\nabla^M_XY$ is basic and $\phi$-related to $\nabla^N_{\bar X}{\bar Y}$ so 
\begin{equation}\label{Ba}
{\rm d}\phi(\nabla^M_XY)=\nabla^N_{\rm d\phi(X)}\rm d\phi(Y).
\end{equation}

Since we can always choose a local orthonormal frame $\{e_i, e_s\}$ adapted to a Riemannian submersion so that $\{e_i: i=1, 2,\dots , n\}$ are basic vector fields, we can use Corollary \ref{Co1} and (\ref{Ba}) to have the following 
\begin{corollary}\label{Co2}
Let $\phi: (M^m,g)\longrightarrow (N^n,h)$ be a Riemannian submersion with basic mean curvature vector field of the fibers. Then it is biharmonic if and only if
\begin{eqnarray}\label{Bas}
{\rm Tr}_h\,(\nabla^{N})^2\tau(\phi)
+\nabla^{N}_{\tau(\phi)}\tau(\phi) +{\rm Ricci}^N(\tau(\phi))=0.
\end{eqnarray}
\end{corollary}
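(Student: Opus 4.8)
The plan is to read the result off Corollary \ref{Co1} directly, by transporting every term on its left-hand side from the total space $M$ down to the base $N$ via the basic-frame relation (\ref{Ba}). First I would fix an adapted orthonormal frame in which the horizontal fields $\{e_i\}_{i=1}^n$ are basic; since $\mu$ is basic by hypothesis, every vector field occurring in Corollary \ref{Co1} is then assembled out of basic fields. Setting $\bar e_i={\rm d}\phi(e_i)$, which is an orthonormal frame on $N$, and $\bar\mu={\rm d}\phi(\mu)$, relation (\ref{Ba}) gives ${\rm d}\phi(\nabla^M_{e_i}\mu)=\nabla^N_{\bar e_i}\bar\mu$ and ${\rm d}\phi(\nabla^M_{e_i}e_i)=\nabla^N_{\bar e_i}\bar e_i$ (the horizontal projections being automatic, since ${\rm d}\phi$ annihilates vertical parts).

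The one fact I would invoke beyond (\ref{Ba}) is that the horizontal part of the covariant derivative of two basic fields is again basic (see \cite{ON}); this makes $(\nabla^M_{e_i}\mu)^{\mathcal H}$ and $(\nabla^M_{e_i}e_i)^{\mathcal H}$ basic and so licenses a second application of (\ref{Ba}). Iterating, I would obtain ${\rm d}\phi(\nabla^M_{e_i}(\nabla^M_{e_i}\mu)^{\mathcal H})=\nabla^N_{\bar e_i}\nabla^N_{\bar e_i}\bar\mu$, ${\rm d}\phi(\nabla^M_{(\nabla^M_{e_i}e_i)^{\mathcal H}}\mu)=\nabla^N_{\nabla^N_{\bar e_i}\bar e_i}\bar\mu$, and ${\rm d}\phi(\nabla^M_\mu\mu)=\nabla^N_{\bar\mu}\bar\mu$. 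Summing on $i$, the first two families recombine into the connection Laplacian ${\rm Tr}_h(\nabla^N)^2\bar\mu=\sum_{i=1}^n\big(\nabla^N_{\bar e_i}\nabla^N_{\bar e_i}-\nabla^N_{\nabla^N_{\bar e_i}\bar e_i}\big)\bar\mu$, so that the equation of Corollary \ref{Co1} collapses to
\begin{equation}\notag
{\rm Tr}_h(\nabla^N)^2\bar\mu-(m-n)\nabla^N_{\bar\mu}\bar\mu+{\rm Ricci}^N(\bar\mu)=0.
\end{equation}

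Finally I would substitute $\bar\mu={\rm d}\phi(\mu)=-\tfrac{1}{m-n}\tau(\phi)$. As $\tfrac{1}{m-n}$ is constant it pulls out of each term through $\mathbb R$-linearity of $\nabla^N$ and ${\rm Ricci}^N$; in particular the quadratic term $-(m-n)\nabla^N_{\bar\mu}\bar\mu$ picks up a factor $(m-n)^{-2}$ that cancels the leading $(m-n)$, leaving $-(m-n)^{-1}\nabla^N_{\tau(\phi)}\tau(\phi)$. Every term thus acquires the common factor $-\tfrac{1}{m-n}$, and dividing through by it yields exactly (\ref{Bas}). I do not expect a genuine obstacle here: the corollary is essentially bookkeeping on top of Corollary \ref{Co1}, and the only points requiring care are the legitimacy of the repeated use of (\ref{Ba}) (guaranteed by the basic-ness of the intermediate horizontal projections) and the verification that the constant factors combine so that no spurious power of $(m-n)$ survives.
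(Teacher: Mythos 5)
Your proposal is correct and follows essentially the same route as the paper, which derives Corollary \ref{Co2} directly from Corollary \ref{Co1} by choosing an adapted frame with basic horizontal fields and applying relation (\ref{Ba}); you have simply spelled out the bookkeeping (the repeated use of (\ref{Ba}) on the basic horizontal projections and the cancellation of the $(m-n)$ factors) that the paper leaves implicit. No gaps.
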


\begin{remark}
One can check that the notion of ``the tension field of a Riemannian submersion being basic" defined in \cite{On} is equivalent to the mean curvature vector field of the fibers of the Riemannian submersion being basic. Thus, our Corollary \ref{Co2} recovers Theorem 4.1 in \cite{On}. 
\end{remark}

\begin{theorem} \label{MT2}
The Riemannian submersion
\begin{align}\notag
\phi: ( M^{m} \times N^n , g_M +e^{2\lambda} g_N) \longrightarrow (M^m , g_M),\;\;
\phi(x,y) =x
\end{align}
defined by the projection of a warped product onto its first factor is biharmonic if and only if
\begin{equation}\label{W-P}
{\rm grad}\, \Delta\lambda+2{\rm Ricci}^{M}({\rm grad}\,\lambda)+\frac{n}{2}{\rm grad}\,|{\rm grad}\lambda|^{2}=0,
\end{equation}
where ${\rm grad}$ and $\Delta$ are the gradient and the Laplace operators on $(M^m, g_M)$. 
\end{theorem}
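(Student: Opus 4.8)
The plan is to identify the slices $\{x\}\times N^n$ as the fibres of $\phi$, to compute their mean curvature vector field $\mu$, and then to feed the result into the characterization of biharmonicity established in Corollary \ref{Co2}. The key preliminary observation is that, since the warping function depends only on the base point, $\mu$ will turn out to be basic, so that Corollary \ref{Co2} applies. Writing the warping function as $f=e^{\lambda}$ and using the standard O'Neill formulas for a warped product, for vertical vector fields $V,W$ the horizontal (normal-to-the-fibre) component of their covariant derivative is governed by the second fundamental form of the fibre,
\begin{equation*}
{\rm II}(V,W)=-\langle V,W\rangle\,{\rm grad}\,\lambda,
\end{equation*}
where ${\rm grad}$ is taken on $(M^m,g_M)$. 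Tracing over an orthonormal frame of the $n$-dimensional fibre yields
\begin{equation*}
\mu=\tfrac1n\sum_{s=1}^{n}{\rm II}(e_s,e_s)=-{\rm grad}\,\lambda .
\end{equation*}
As $\lambda\in C^{\infty}(M^m)$, the field ${\rm grad}\,\lambda$ is well defined on the base and $\mu$ is (up to sign) its horizontal lift; hence $\mu$ is basic, which is exactly the hypothesis of Corollary \ref{Co2}. (This is consistent with the warped-product example in the Introduction, where $\lambda=\tfrac12\ln C+2\ln y$ gives $\mu=-\tfrac2y\tfrac{\partial}{\partial y}$.)

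Next I would record the tension field. Here the fibre dimension is $n$, so the role of $m-n$ in Theorem \ref{MT} is played by $n$, and $\tau(\phi)=-n\,{\rm d}\phi(\mu)$; since ${\rm d}\phi(\mu)=-{\rm grad}\,\lambda$ we obtain $\tau(\phi)=n\,{\rm grad}\,\lambda$, a vector field on $(M^m,g_M)$. Substituting this into equation (\ref{Bas}) of Corollary \ref{Co2}, with the target being $M^m$ so that $\nabla^N,{\rm Ricci}^N,{\rm Tr}_h$ become $\nabla^M,{\rm Ricci}^M,{\rm Tr}_{g_M}$, the three terms scale by $n$, $n^2$, $n$ respectively; dividing by $n$ gives that $\phi$ is biharmonic if and only if
\begin{equation*}
{\rm Tr}_{g_M}(\nabla^M)^2\,{\rm grad}\,\lambda+n\,\nabla^M_{{\rm grad}\,\lambda}{\rm grad}\,\lambda+{\rm Ricci}^M({\rm grad}\,\lambda)=0 .
\end{equation*}

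It then remains to rewrite the first two terms. For the first I would invoke the Weitzenb\"ock (Bochner) identity for a gradient field, which follows by applying $\Delta_H=\nabla^*\nabla+{\rm Ric}$ to the exact one-form ${\rm d}\lambda$ and raising the index:
\begin{equation*}
{\rm Tr}_{g_M}(\nabla^M)^2\,{\rm grad}\,\lambda={\rm grad}\,(\Delta\lambda)+{\rm Ricci}^M({\rm grad}\,\lambda),
\end{equation*}
with $\Delta\lambda={\rm div}\,{\rm grad}\,\lambda$ the trace Laplacian on $(M^m,g_M)$. For the second, the symmetry of the Hessian gives $\nabla^M_{{\rm grad}\,\lambda}{\rm grad}\,\lambda=\tfrac12{\rm grad}\,|{\rm grad}\,\lambda|^2$. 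Inserting both, the two Ricci contributions combine and the equation collapses to
\begin{equation*}
{\rm grad}\,\Delta\lambda+2\,{\rm Ricci}^M({\rm grad}\,\lambda)+\tfrac{n}{2}{\rm grad}\,|{\rm grad}\,\lambda|^2=0,
\end{equation*}
which is precisely (\ref{W-P}). The step requiring genuine care — and the real content of the argument — is the bookkeeping of signs and conventions: one must use the trace Laplacian so that the Weitzenb\"ock identity produces a $+{\rm Ricci}^M$ term that merges with the Ricci term already present to yield the coefficient $2$, and one must consistently carry the fibre dimension $n$ (in place of $m-n$) through Theorem \ref{MT} and Corollary \ref{Co2}. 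Verifying that $\mu$ is basic (which relies on $\lambda$ being a function on the base alone) is what makes Corollary \ref{Co2} available in the first place; everything after that is routine.
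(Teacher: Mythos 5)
Your proposal is correct and follows essentially the same route as the paper: compute $\mu=-{\rm grad}\,\lambda$ from the warped-product structure, observe it is basic, apply Corollary \ref{Co2} with $\tau(\phi)=n\,{\rm grad}\,\lambda$, and then simplify via the Weitzenb\"ock identity for ${\rm Tr}_{g_M}(\nabla^M)^2{\rm grad}\,\lambda$ and the identity $\nabla^M_{{\rm grad}\lambda}{\rm grad}\,\lambda=\tfrac12{\rm grad}\,|{\rm grad}\,\lambda|^2$. The only cosmetic difference is that the paper writes out the warped-product connection formulas explicitly (and treats the case of constant $\lambda$ separately at the outset), whereas you package the same computation through the second fundamental form of the fibres.
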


\begin{proof}

First of all, one can easily see that if $\lambda$ is constant, then the Riemannian submersion $\phi$ is harmonic since it has minimal fibers. In this case, we also have (\ref{W-P}). Now suppose that $\lambda$ is not constant. Let  $P=(M\times N, g_M+e^{2\lambda}g_N)$ be the warped product of $(M^m, g_M)$ and $(N^n, g_N)$ with the warping function $f=e^{\lambda}$. We denote by $\nabla^{P}$, $\nabla^{M}$ and $\nabla^{N}$  the connections of the warped product $(P, g=g_M+e^{2\lambda}g_N)$, the base  $(M^m, g_M)$ and the fiber $(N^n, g_N)$ spaces respectively. Then we have
\begin{equation}\label{Prod}
\begin{cases}
 \nabla^{P}_XY=\nabla^{M}_XY,\;\;\; \nabla^{P}_XV=\nabla^{P}_VX=X(\lambda)V\;\; \\
 \nabla^{P}_VW=-g(V,W){\rm grad}\,\lambda+\nabla^{N}_VW\\
\end{cases}
\end{equation}
where for all $X, Y\in \mathfrak{X}(M)$ and $V, W \in \mathfrak{X}(N)$.\\

Let $\{e_{1},...,e_{m},e_{m+1},...,e_{m+n}\}$ be a local orthonormal frame on the warped product  $P=(M\times N,  g_M +e^{2\lambda} g_N)$ such that $\{e_{1},...,e_{m}\}$ are horizontal 
and $\{e_{m+1},...,e_{m+n}\}$ are vertical. Clearly this orthonormal frame is adapted to the Riemannian submersion $\phi$. Using (\ref{Prod}) we obtain the tension field of the Riemannian submersion 
\begin{eqnarray}\label{GD40}
\tau(\phi) =-n{\rm d}\phi (\mu)=-\sum_{s=m+1}^{m+n}{\rm d} \phi(\nabla^{P}_{e_{s}}e_{s})=n\, {\rm d} \phi({\rm grad}\lambda)=n\, {\rm grad}\lambda,
\end{eqnarray}
where the last equality was obtained by using the fact that $\phi$ is a projection and hence ${\rm d} \phi(X)=X$ for any horizontal vector field $X$.\\

Clearly, in this case, the mean curvature vector field $\mu=-{\rm grad} \lambda$ of the fibers of the Riemannian submersion is basic. So, by Corollary \ref{Co2} and (\ref{GD40}), the Riemannian submersion $\phi$ is biharmonic if and only if 

\begin{eqnarray}\label{GD41}
{\rm Tr}_{g_M}\,(\nabla^{M})^2 {\rm grad}\lambda
+n \nabla^{M}_{{\rm grad}\lambda}{\rm grad}\lambda +{\rm Ricci}^M({\rm grad}\lambda)=0.
\end{eqnarray}

We can check (see e.g., \cite{BFO}) that
\begin{equation}\label{GD42}
{\rm Tr}_{g_M} (\nabla^M)^{2}({\rm grad}\,\lambda)={\rm grad}\, \Delta\lambda+{\rm Ricci}^{M}({\rm grad}\lambda).
\end{equation}

On the other hand, it is  the well known  (see e.g., \cite{Pe}) that
\begin{equation}\label{GD43}
\nabla^{M}_{{\rm grad}\lambda}{\rm grad}\lambda=\frac{1}{2}{\rm grad} (|{\rm grad}\lambda|^{2}).
\end{equation}

Substituting (\ref{GD42}) and (\ref{GD43}) into  (\ref{GD41}) we obtain Equation (\ref{W-P}), which completes the proof of the theorem.
\end{proof}
\begin{remark}
 It is interesting to note that in their study of biharmonic maps from warped product manifolds, Balmus-Montaldo-Oniciuc \cite{BMO} introduced the notion of {\em biharmonic warping function} defined to be a warping function $f$ so that the identity map ${\rm Id}:M\times_{f}N\equiv (M\times N, g_M+f^2g_N)\longrightarrow (M\times N, g_M+g_N)$, ${\rm Id}(x,y)=(x,y)$ is a proper biharmonic map for some manifold $N$ of dimension greater than zero. They also proved that the projection
\begin{align}\notag
\phi: ( M^{m} \times N^n , g_M +f^2 g_N) \longrightarrow (M^m , g_M),\;\;
\phi(x,y) =x
\end{align}
 is  proper biharmonic if and only if
\begin{equation}
{\rm Tr}_g\nabla ^2 {\rm grad}\, \ln \,f+{\rm Ricci}^{M}({\rm grad}\, \ln \,f)+\frac{n}{2}{\rm grad}\,|{\rm grad}\, \ln \,f|^2=0.
\end{equation}
(ii)  Note also that the biharmonic equation for the projection of a warped product in the form of (\ref{Bas}) was obtained in \cite{Ur1} as in this case the mean curvature vector field of the Riemannian submersion is basic. So our Theorem \ref{MT2} gives an improvement of the equation obtained in \cite{Ur1}.\\
\end{remark}

\begin{corollary}
If $ (M^m, g_M)$ is an Einstein Manifold with ${\rm Ricci}^{M}=ag_M$, then the Riemannian submersion
\begin{align}\notag
\phi: ( M^{m} \times N^n , g_M +e^{2\lambda} g_N) \longrightarrow (M^m , g_M),\;\;
\phi(x,y) =x
\end{align}
defined by the projection of a warped product onto its first factor is biharmonic if and only if
\begin{equation}\label{Einstein}
 \Delta\lambda+2a\lambda+\frac{n}{2}|{\rm grad}\lambda|^{2}=C,
\end{equation}
where $C$ is a constant and $\Delta$ is the Laplace operator on $(M^m, g_M)$. 
\end{corollary}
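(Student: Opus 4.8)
The plan is to derive this corollary directly from Theorem \ref{MT2}, using the Einstein condition to collapse the Ricci term into a single gradient and then invoking the fact that a function with vanishing gradient on a connected manifold is constant. So the whole argument amounts to observing that the biharmonicity equation (\ref{W-P}) can be integrated once in the Einstein case.

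First I would recall the biharmonicity criterion (\ref{W-P}) furnished by Theorem \ref{MT2}, namely
\begin{equation}\notag
{\rm grad}\,\Delta\lambda + 2{\rm Ricci}^{M}({\rm grad}\,\lambda) + \frac{n}{2}{\rm grad}\,|{\rm grad}\,\lambda|^{2} = 0,
\end{equation}
which already characterizes biharmonicity of the projection for an arbitrary base $(M^m, g_M)$. The corollary is then just the specialization of this identity to an Einstein base.

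Second, I would translate the Einstein hypothesis. Since ${\rm Ricci}^{M} = a\,g_M$ as a $(0,2)$-tensor with $a$ constant, the associated Ricci endomorphism appearing in (\ref{W-P}), via the identification $g_M({\rm Ricci}^{M}(X),Y) = {\rm Ricci}^{M}(X,Y)$, acts as scalar multiplication by $a$, so ${\rm Ricci}^{M}({\rm grad}\,\lambda) = a\,{\rm grad}\,\lambda$. Because $a$ is a constant, I can write $2a\,{\rm grad}\,\lambda = {\rm grad}(2a\lambda)$, and the entire left-hand side of (\ref{W-P}) becomes the gradient of one scalar function:
\begin{equation}\notag
{\rm grad}\Big(\Delta\lambda + 2a\lambda + \frac{n}{2}|{\rm grad}\,\lambda|^{2}\Big) = 0.
\end{equation}

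Finally, I would conclude by noting that a smooth function whose gradient vanishes identically on a connected manifold is constant; hence the bracketed expression equals some constant $C$, which is exactly Equation (\ref{Einstein}), and conversely (\ref{Einstein}) forces (\ref{W-P}) by taking gradients. There is no serious obstacle here, as the argument is essentially the single observation that the Einstein condition converts the tensorial Ricci term into a multiple of ${\rm grad}\,\lambda$. The only points deserving a little care are the bookkeeping between the $(0,2)$ Ricci tensor used to phrase the Einstein condition and the Ricci operator appearing in (\ref{W-P}), the use of constancy of $a$ to move it inside the gradient, and the implicit connectedness of $M$ so that a vanishing gradient yields global rather than merely local constancy.
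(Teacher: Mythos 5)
Your proposal is correct and follows essentially the same route as the paper: specialize (\ref{W-P}) using ${\rm Ricci}^{M}({\rm grad}\,\lambda)=a\,{\rm grad}\,\lambda$, absorb the constant $a$ into a gradient, and integrate once to get (\ref{Einstein}). Your explicit remarks about the $(0,2)$-tensor versus operator identification and the connectedness of $M$ are fine points the paper leaves implicit, but they do not change the argument.
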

\begin{proof}
If $ (M^m, g_M)$ is an Einstein Manifold with ${\rm Ricci}^{M}=a\,g_M$, then one can easily check that ${\rm Ricci}^{M}({\rm grad}\lambda)=a\, {\rm grad}\lambda$. Substituting this into (\ref{W-P}) we conclude that the Riemannian submersion
\begin{align}\notag
\phi: ( M^{m} \times N^n , g_M +e^{2\lambda} g_N) \longrightarrow (M^m , g_M),\;\;
\phi(x,y) =x
\end{align}
defined by the projection of a warped product onto its first factor is biharmonic if and only if
\begin{equation}\notag
 {\rm grad}\Big(\Delta\lambda+2a\lambda+\frac{n}{2}|{\rm grad}\lambda|^{2}\Big)=0,
\end{equation}
from which we obtain the corollary.
\end{proof}

\begin{corollary}\label{MT3}
 The Riemannian submersion
\begin{align}\notag
\phi: ( M^{m} \times N^n , g_M +e^{2\lambda} g_N) \longrightarrow (M^m , g_M),\;\;
\phi(x,y) =x
\end{align}
defined by the projection of a warped product onto a compact manifold with negative Ricci curvature is  biharmonic if and only if
it is harmonic.
\end{corollary}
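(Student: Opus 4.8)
The plan is to reduce the whole problem to the scalar equation (\ref{W-P}) on the \emph{compact base} $M$ and then run a Bochner-type integral argument with a carefully chosen exponential weight. By Theorem \ref{MT2}, the projection $\phi$ is biharmonic if and only if
\[
{\rm grad}\,F + 2\,{\rm Ricci}^M({\rm grad}\,\lambda)=0, \qquad F:=\Delta\lambda+\tfrac{n}{2}|{\rm grad}\,\lambda|^2,
\]
where I have simply grouped the two gradient terms of (\ref{W-P}) into ${\rm grad}\,F$. Since a harmonic map is trivially biharmonic, and a constant $\lambda$ gives $\tau(\phi)=n\,{\rm grad}\,\lambda=0$ by (\ref{GD40}) (so $\phi$ is then harmonic), it suffices to prove the nontrivial direction: on a compact $M$ with ${\rm Ricci}^M<0$, every solution $\lambda$ of the displayed equation is constant.

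The key step is to test the equation not against ${\rm grad}\,\lambda$ with the plain Riemannian measure, but against the \emph{weighted} field $e^{\frac{n}{2}\lambda}\,{\rm grad}\,\lambda$. Taking the $L^2(M)$ inner product of the equation with $e^{\frac{n}{2}\lambda}\,{\rm grad}\,\lambda$ and integrating by parts over the closed manifold $M$, I would use the divergence identity (consistent with the sign convention of (\ref{GD42})--(\ref{GD43}), i.e. $\Delta={\rm div}\,{\rm grad}$)
\[
{\rm div}\big(e^{\frac{n}{2}\lambda}\,{\rm grad}\,\lambda\big)=e^{\frac{n}{2}\lambda}\Big(\Delta\lambda+\tfrac{n}{2}|{\rm grad}\,\lambda|^2\Big)=e^{\frac{n}{2}\lambda}F
\]
to rewrite the first term as a perfect square:
\[
\int_M\langle{\rm grad}\,F,{\rm grad}\,\lambda\rangle\,e^{\frac{n}{2}\lambda}\,{\rm d}v_{g_M}=-\int_M F\,{\rm div}\big(e^{\frac{n}{2}\lambda}\,{\rm grad}\,\lambda\big)\,{\rm d}v_{g_M}=-\int_M F^2\,e^{\frac{n}{2}\lambda}\,{\rm d}v_{g_M}.
\]
Pairing the Ricci term similarly, this produces the identity
\[
\int_M F^2\,e^{\frac{n}{2}\lambda}\,{\rm d}v_{g_M}=2\int_M {\rm Ricci}^M({\rm grad}\,\lambda,{\rm grad}\,\lambda)\,e^{\frac{n}{2}\lambda}\,{\rm d}v_{g_M}.
\]

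Now I would read off the signs. The left-hand side is nonnegative, while the integrand on the right is $\le 0$ because ${\rm Ricci}^M<0$ and $e^{\frac{n}{2}\lambda}>0$. Hence both sides vanish; in particular the right-hand integrand is identically zero, and strict negativity of ${\rm Ricci}^M$ forces ${\rm grad}\,\lambda\equiv 0$. Thus $\lambda$ is constant, $\tau(\phi)=0$, and $\phi$ is harmonic, which together with the trivial converse proves the corollary.

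I expect the main (and essentially only) obstacle to be the choice of the correct integrating factor. If one pairs (\ref{W-P}) with ${\rm grad}\,\lambda$ against the plain volume, the cubic contribution of $\tfrac{n}{2}\,{\rm grad}|{\rm grad}\,\lambda|^2$ reduces to $-\tfrac{n}{2}\int_M|{\rm grad}\,\lambda|^2\Delta\lambda$, which has no definite sign, and the argument stalls. The exponent $\tfrac{n}{2}$ is forced precisely by the requirement that ${\rm div}(e^{c\lambda}{\rm grad}\,\lambda)=e^{c\lambda}(\Delta\lambda+c|{\rm grad}\,\lambda|^2)$ reproduce the exact combination $F=\Delta\lambda+\tfrac{n}{2}|{\rm grad}\,\lambda|^2$ occurring in the equation, turning the leading term into $\int_M F^2 e^{\frac{n}{2}\lambda}$. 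This weight is an exponential in $\lambda$ of the same type as the fiber-volume density of the warped product, which is the conceptual reason a Jiang-type computation on the (possibly noncompact) total space implicitly carries it; working directly on the base with this weight is the cleanest route. Once the weight is identified, the remaining sign analysis is routine.
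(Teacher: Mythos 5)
Your proof is correct, and it takes a genuinely different route from the paper. The paper substitutes ${\rm grad}\,\Delta\lambda$ from (\ref{W-P}) into the Bochner formula and then runs a pointwise maximum-principle argument: at a maximum point $p$ of $|{\rm grad}\,\lambda|^2$ one has $({\rm grad}\,|{\rm grad}\,\lambda|^2)(p)=0$ and $\Delta|{\rm grad}\,\lambda|^2(p)\le 0$, while the right-hand side of the resulting identity is strictly positive under the negative Ricci assumption unless ${\rm grad}\,\lambda(p)=0$. You instead avoid the Bochner formula entirely: writing (\ref{W-P}) as ${\rm grad}\,F+2\,{\rm Ricci}^M({\rm grad}\,\lambda)=0$ with $F=\Delta\lambda+\tfrac{n}{2}|{\rm grad}\,\lambda|^2$, pairing with $e^{\frac{n}{2}\lambda}{\rm grad}\,\lambda$, and using ${\rm div}(e^{\frac{n}{2}\lambda}{\rm grad}\,\lambda)=e^{\frac{n}{2}\lambda}F$ to produce the identity $\int_M F^2 e^{\frac{n}{2}\lambda}=2\int_M{\rm Ricci}^M({\rm grad}\,\lambda,{\rm grad}\,\lambda)e^{\frac{n}{2}\lambda}$, whose two sides have opposite signs. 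Your integration by parts and sign conventions ($\Delta={\rm div}\,{\rm grad}$, consistent with (\ref{GD42})--(\ref{GD43})) check out, and the conclusion ${\rm grad}\,\lambda\equiv 0$ follows from strict negativity of the Ricci form. The integral approach buys a slightly stronger output (it shows $F\equiv 0$ as well, not just constancy of $\lambda$) and sidesteps a small infelicity in the paper's argument, which names the maximum point of $|\nabla{\rm d}\lambda|^2$ when it plainly intends the maximum point of $|{\rm grad}\,\lambda|^2$; the paper's pointwise argument, on the other hand, needs no integration and would adapt more readily to situations where only a local maximum is available. Both arguments implicitly require $M$ to be closed (no boundary), which is the standard reading of ``compact'' here.
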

\begin{proof}
By Theorem \ref{MT2}, the Riemannian submersion
\begin{align}\notag
\phi: ( M^{m} \times N^n , g_M +e^{2\lambda} g_N) \longrightarrow (M^m , g_M),\;\;
\phi(x,y) =x
\end{align}
defined by the projection of a warped product onto its first factor is biharmonic if and only if
\begin{equation}\label{W1}
{\rm grad}\, \Delta\lambda+2{\rm Ricci}^{M}({\rm grad}\,\lambda)+\frac{n}{2}{\rm grad}\,|{\rm grad}\lambda|^{2}=0.
\end{equation}
On the other hand, it is well known that for any function $\lambda$ on $M$, we have the following Bochner formula
\begin{equation}\notag
\frac{1}{2} \Delta|{\rm grad}\, \lambda|^2=|\nabla{\rm d}\, \lambda|^2+ \langle {\rm grad}\, \lambda, {\rm grad}\, \Delta \lambda\rangle +{\rm Ricci}^{M}({\rm grad}\,\lambda, {\rm grad}\,\lambda).
\end{equation}
Substituting the term ${\rm grad}\, \Delta\lambda$ from (\ref{W1}) into the Bochner formula we have
\begin{equation}\label{W2}
\frac{1}{2} \Delta|{\rm grad}\, \lambda|^2=|\nabla{\rm d}\, \lambda|^2 - \frac{n}{2} \langle {\rm grad}\, \lambda, {\rm grad}\,|{\rm grad}\lambda|^{2}\rangle -{\rm Ricci}^{M}({\rm grad}\,\lambda, {\rm grad}\,\lambda).
\end{equation}
If $\lambda$ is constant, then the Riemannian submersion is harmonic and the corollary follows. Now suppose $\lambda$ is not a constant, then, since $M$ is compact the function $|\nabla{\rm d}\, \lambda|^2$ attains its maximum at some point $p\in M$. Evaluating both sides of Equation (\ref{W2}) at the point $p$ we see that the right-hand side is positive  by the fact that $({\rm grad}\,|{\rm grad}\lambda|^{2})(p)=0$ and the assumption of negative Ricci curvature whilst the left-hand side is non-positive since the Laplacian being the trace of the Hessian of $|{\rm grad}\, \lambda|^2$, which is semi-negative definite at $p$. The contradiction shows that $\lambda$ has to be a constant on $M$. This completes the proof of the corollary.

\end{proof}

\begin{example}
For any constants $a_i>0, \,b_i$,  $i=1,2,...,n,$  let $\kappa_i(x_i)=-\frac{2}{x_i+b_i}$,  $\kappa_i(x_i)=a_i\tan\big(\frac{a_i}{2}(x_i+b_i)\big),$ or
$\kappa_i(x_i)=\frac{a_i(1+e^{a_i(x_i+b_i)})}{1-e^{a_i(x_i+b_i)}},$ and let $\lambda(x_1,x_2,...,x_n)=- \sum^{n}_{i=1} \int\kappa_i(x_i)dx_i,$ then the Riemannian submersion
\begin{align}\notag
\pi  :\big(  \mathbb{R}^{n} \times \mathbb{R} , g_0 +e^{2\lambda(x)} dt^2\big) \longrightarrow (\mathbb{R}^n ,g_0),\;\;\pi(x,t) =x
\end{align}
given by the projection of a warped product onto its first factor is a proper biharmonic map. In particular, the projections
 \begin{align}\notag
\pi  :\big( (0,\infty)^n \times \mathbb{R} , g_0 +(x_1 x_2\cdots x_n)^4 dt^2\big) \longrightarrow ((0,\infty)^n ,g_0),\;\;\pi(x,t) =x,
\end{align}
and 
 \begin{align}\notag
\pi  :\big( (0,\infty)^n \times \mathbb{R} , g_0 +(\sinh x_1 \sinh x_2\cdots \sinh x_n)^2 dt^2\big) \longrightarrow ((0,\infty)^n ,g_0),\;\;\pi(x,t) =x,
\end{align}
are both biharmonic Riemannian submersions.

In fact, since the base manifold is an open submanifold of a Euclidean space which is an Einstein space with $a=0$, we use this and Equation (\ref{Einstein}) with $n=1$ to conclude that the Riemannian submersion $\pi$ is biharmonic if and only if 
\begin{equation}\label{EinP1}
 \Delta\lambda+\frac{1}{2}|{\rm grad}\lambda|^{2}=C,
\end{equation}
where the Laplacian and the gradient operators are defined by the Euclidean metric. If we look for the special solutions of the form $\lambda=-\sum^{n}_{i=1} \int\kappa_i(x_i)dx_i$, then a straightforward computation shows that Equation (\ref{EinP1}) is equivalent to 
\begin{align}\label{P20}
 -\sum_{i=1}^n\big( \kappa^{\prime}_i-\frac{\kappa^{2}_i}{2}\big)=C.
\end{align}
Since $\kappa_i=\kappa_i(x_i)$, one can easily check that (\ref{P20}) is equivalent to

\begin{align}\label{P2}
 \kappa^{\prime}_i-\frac{\kappa^{2}_i}{2}=C_i,  \ i=1,2,...,n,
\end{align}
which can be solved by the following cases.\\ 

\textbf{Case I.} For $C_i=0$,  we solve (\ref{P2}) to have solutions
$$
\kappa_i=-\frac{2}{x_i+b_i},\ \ \ i=1,2,...,n,
$$
for some constant $b_i.$

\textbf{Case II.} For  $C_i=\frac{a_i^2}{2}>0$ with constant $a_i>0$, then it is easily chcked that (\ref{P2}) can be solved to have
$$
\kappa_i=a_i\tan\Big(\frac{a_i}{2}(x_i+b_i)\Big),\ \ \ i=1,2,...,n,
$$
for some constant $b_i.$

\textbf{Case III.} For $C_i=-\frac{a_i^2}{2}<0$, we solve (\ref{P2}) to obtain

$$
\kappa_i=\frac{a_i(1+e^{a_i(x_i+b_i)})}{(1-e^{a_i(x_i+b_i)})},\ \ \ i=1,2,...,n,
$$
for some constant $b_i.$\\

From these, we obtain the example.
\end{example}

\section{Biharmonic Riemannian submersions with $1$-dimensional fibers}

 In this section, we will follow the idea from \cite{WO} to describe biharmonicity of a Riemannian submersion with $1$-dimensional fibers
from a generic $(n+1)$-manifold by using the integrability data of a
special orthonormal frame adapted to a Riemannian submersion.  We also use the biharmonic equation derived to study biharmonicity of a Riemannian submersion defined by the projection of a twisted product onto it first factor.\\

Let $\phi:( M^{n+1} , g)\longrightarrow (N^n, h)$ be a Riemannian
submersion. A local orthonormal frame is said to be {\bf adapted to
the Riemannian submersion} $\phi$ if the vector fields in the frame
that are tangent to the horizontal distribution are basic (i.e.,
they are $\phi$-related to a local orthonormal frame in the base
space). Such a frame always exists (cf. e.g., \cite{BW1}). Let
$\{e_1,\ldots,e_n, e_{n+1}\}$ be an orthonormal frame adapted
to $\phi$ with $e_{n+1}$ being vertical. Then, it is well known (see
\cite{ON}) that $[e_i,e_{n+1}]$ $(i=1, 2, \ldots, n)$ are vertical
and $[e_i,e_j]$ $(i, j=1, 2, \ldots, n)$ are $\phi$-related to
$[\varepsilon_i, \varepsilon_j]$, where $\{\varepsilon_i, \ldots,
\varepsilon_n\}$ is an orthonormal frame in the base manifold. If we
assume that
\begin{equation}\label{RS22}
[\varepsilon_i,\varepsilon_j]=F_{ij}^k\varepsilon_k,
\end{equation}
for $F^k_{ij}\in C^{\infty}(N)$ and use the notations
$f^k_{ij}=F^k_{ij}\circ \phi,\;\forall\; i, j, k=1, 2, \ldots, n$. Then, we have
\begin{equation}\label{LB}
\begin{cases}
[e_i,e_{n+1}]=\kappa_ie_{n+1},\\
[e_i,e_j]=f^k_{ij} e_k-2\sigma_{ij}e_{n+1},\;\;\; i, j=1, 2, \ldots,
n,
\end{cases}
\end{equation}
where $\kappa_i,\,{\rm and}\; \sigma_{ij} \in C^{\infty}(M)$. We
will call $ \{f^k_{ij},  \kappa_i,\,\sigma_{ij}\}$ {\bf the
integrability data} of the adapted frame of the Riemannian
submersion $\phi$. It follows from (\ref{LB}) that 
\begin{eqnarray}
f_{ij}^k=-f_{ji}^k\; \;{\rm and}\;\sigma_{ij}=-\sigma_{ji},\;\;\forall\; i,j, k=1,2, \cdots, n.
\end{eqnarray}

\begin{theorem}\label{1D}
 Let $\phi:(M^{n+1},g)\longrightarrow (N^n,h)$ be a Riemannian
submersion with the adapted frame $\{e_1,\ldots, e_{n+1}\}$ and the
integrability data $ \{f^k_{ij}, \kappa_i, \sigma_{ij}\}$. Then,
the Riemannian submersion $\phi$ is biharmonic if and only if
\begin{eqnarray}\label{1DE}
&&\Delta\kappa_k+\sum^{n}_{i,j=1}\Big(2e_i(\kappa_j)P^{k}_{ij}+\kappa_j\big[ e_i (P^{k}_{ij})+P^{l}_{ij}P_{il}^k -\kappa_iP_{ij}^k - P_{ii}^lP_{lj}^k\big] \Big)\\\notag
&&+{\rm Ricci}^N(\rm d\phi(\mu), {\rm d}\phi(e_k))=0, \;\;k=1,2,\ldots, n,
\end{eqnarray}
where $P_{ij}^k=\frac{1}{2}(-f_{ik}^j-f_{jk}^i +f_{ij}^k)$ for all $ i,j, k=1,2, \cdots, n$ and $\Delta$ denotes the Laplacian of $(M^{n+1}, g)$.
\end{theorem}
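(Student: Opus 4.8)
The plan is to specialize the general bitension field formula of Theorem \ref{MT} to the case $m=n+1$, expressing every term in the displayed formula \eqref{EQ1} through the integrability data $\{f^k_{ij},\kappa_i,\sigma_{ij}\}$. Since the fibers are one-dimensional with $e_{n+1}$ vertical and of unit length, the mean curvature vector field is $\mu=(\nabla^M_{e_{n+1}}e_{n+1})^{\mathcal H}$. First I would compute $\mu$ in terms of the integrability data: using the Koszul formula together with the bracket relations \eqref{LB}, one finds $\mu=\sum_{k}\kappa_k e_k$ up to sign, so that $\mathrm d\phi(\mu)=\sum_k\kappa_k\,\mathrm d\phi(e_k)$ and the tension field is $\tau(\phi)=-(m-n)\mathrm d\phi(\mu)=-\sum_k\kappa_k\,\mathrm d\phi(e_k)$. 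I would also record the Levi-Civita connection coefficients $\nabla^M_{e_i}e_j$ in the adapted frame by solving Koszul's formula against \eqref{LB}; this is where the combination $P^k_{ij}=\frac12(-f^j_{ik}-f^i_{jk}+f^k_{ij})$ arises naturally, namely $(\nabla^M_{e_i}e_j)^{\mathcal H}=\sum_k P^k_{ij}e_k$.

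Next I would substitute these expressions into the four families of terms appearing in \eqref{EQ1}. With $m-n=1$ the sum over vertical indices $s$ collapses to the single index $s=n+1$. The term $\sum_i\big(\nabla^M_{e_i}(\nabla^M_{e_i}\mu)^{\mathcal H}-\nabla^M_{(\nabla^M_{e_i}e_i)^{\mathcal H}}\mu\big)$ produces the rough Laplacian of the horizontal field $\mu$, which after expanding $\mu=\sum_k\kappa_ke_k$ and collecting the $e_k$-component yields $\Delta\kappa_k$ together with the first-derivative cross terms $2e_i(\kappa_j)P^k_{ij}$ and the zeroth-order terms $\kappa_j[e_i(P^k_{ij})+P^l_{ij}P^k_{il}-P^l_{ii}P^k_{lj}]$. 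The bracket terms $[[\mu,e_{n+1}],e_{n+1}]$ and $[\mu,(\nabla^M_{e_{n+1}}e_{n+1})^{\mathcal V}]$, evaluated via \eqref{LB}, contribute to the $-\kappa_iP^k_{ij}$ correction and combine with $-(m-n)\nabla^M_\mu\mu$; the guiding principle is that the full horizontal $e_k$-component of the bracketed expression must reorganize into exactly the coefficient displayed in \eqref{1DE}. Finally the curvature term $-(m-n)\mathrm{Ricci}^N(\mathrm d\phi(\mu))$ contributes, after pairing with $\mathrm d\phi(e_k)$, the summand $\mathrm{Ricci}^N(\mathrm d\phi(\mu),\mathrm d\phi(e_k))$.

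To pass from the vector equation $\tau_2(\phi)=0$ to the scalar system \eqref{1DE}, I would note that $\tau_2(\phi)$ is a section of $\phi^{-1}TN$ and that $\{\mathrm d\phi(e_1),\dots,\mathrm d\phi(e_n)\}$ is an orthonormal frame on $N$; hence $\tau_2(\phi)=0$ is equivalent to the vanishing of each component $\langle\tau_2(\phi),\mathrm d\phi(e_k)\rangle=0$ for $k=1,\dots,n$. Since $\phi$ has basic mean curvature here, Corollary \ref{Co1} already reduces the vanishing of $\tau_2(\phi)$ to the vanishing of the horizontal expression pushed forward by $\mathrm d\phi$, which I would exploit to drop the genuinely vertical contributions $[\mu,(\nabla^M_{e_i}e_i)^{\mathcal V}]$ that are annihilated by $\mathrm d\phi$.

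The main obstacle, and the bulk of the work, will be the bookkeeping in the rough-Laplacian term: carefully expanding $\nabla^M_{e_i}\nabla^M_{e_i}\big(\sum_k\kappa_k e_k\big)$ using the product rule, the connection coefficients $P^k_{ij}$, and the vertical bracket coefficients $\kappa_i$, then isolating the horizontal $e_k$-component and verifying that the second-order piece is precisely $\Delta\kappa_k$ rather than merely the flat Laplacian. Keeping track of which connection terms are horizontal versus vertical, and confirming that the vertical pieces either cancel or are killed by $\mathrm d\phi$, is the delicate step; matching the surviving zeroth- and first-order terms against the stated coefficients $2e_i(\kappa_j)P^k_{ij}$ and $\kappa_j[e_i(P^k_{ij})+P^l_{ij}P^k_{il}-\kappa_iP^k_{ij}-P^l_{ii}P^k_{lj}]$ is then a matter of disciplined index manipulation.
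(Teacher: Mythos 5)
Your overall strategy is the same as the paper's: compute $\nabla^M_{e_i}e_j=P^k_{ij}e_k-\sigma_{ij}e_{n+1}$ and $\mu=(\nabla^M_{e_{n+1}}e_{n+1})^{\mathcal H}=\sum_i\kappa_ie_i$ from the Koszul formula and the bracket relations \eqref{LB}, substitute term by term into the bitension field of Theorem \ref{MT} with $m-n=1$, and read off components against the orthonormal frame $\{{\rm d}\phi(e_k)\}$. However, there is a genuine error in the middle of your argument: you assert that ``$\phi$ has basic mean curvature here'' and invoke Corollary \ref{Co1}. This is false for a general Riemannian submersion with $1$-dimensional fibers: $\mu=\sum_i\kappa_ie_i$ is basic only if the $\kappa_i$ are constant along the fibers, i.e.\ $e_{n+1}(\kappa_i)=0$, which is not assumed (and the paper's own Corollary \ref{Twisted} is an explicit example where $\mu$ is not basic, as Remark 3(i) points out). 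If you actually applied Corollary \ref{Co1} you would discard the whole family $\sum_s\{[[\mu,e_s],e_s]+[\mu,(\nabla^M_{e_s}e_s)^{\mathcal V}]\}$, and in particular lose the contribution $[[\mu,e_{n+1}],e_{n+1}]=\sum_k e_{n+1}(e_{n+1}\kappa_k)e_k+(\text{vertical})$, which supplies exactly the vertical second derivative needed for the $\Delta$ in \eqref{1DE} to be the full Laplacian of $(M^{n+1},g)$ rather than only its horizontal part; you would end up with a different, weaker equation.

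The correct justifications are more elementary and do not need any basicness: $[\mu,(\nabla^M_{e_i}e_i)^{\mathcal V}]=0$ and $[\mu,(\nabla^M_{e_{n+1}}e_{n+1})^{\mathcal V}]=0$ simply because $(\nabla^M_{e_i}e_i)^{\mathcal V}=-\sigma_{ii}e_{n+1}=0$ (skew-symmetry of $\sigma_{ij}$) and $(\nabla^M_{e_{n+1}}e_{n+1})^{\mathcal V}=0$. Relatedly, your attribution of the $-\kappa_iP^k_{ij}$ correction to the double-bracket terms is off: that term comes from the horizontal part of $-(m-n)\nabla^M_\mu\mu=-\sum_{i,k}\kappa_ie_i(\kappa_k)e_k-\sum_{i,j,k}\kappa_i\kappa_jP^k_{ij}e_k+(\text{vertical})$, whose first sum, together with $-\sum_iP^j_{ii}e_j(\kappa_k)$ from $-\nabla^M_{(\nabla^M_{e_i}e_i)^{\mathcal H}}\mu$ and the second derivatives $\sum_{i=1}^{n+1}e_ie_i(\kappa_k)$, assembles into $\Delta\kappa_k$. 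Once these two points are repaired, the remaining bookkeeping you describe matches the paper's computation.
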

\begin{proof}
Let $P_{ij}^k=\frac{1}{2}(-f_{ik}^j-f_{jk}^i +f_{ij}^k)$ for all $ i,j, k=1,2, \cdots, n$. Then, one can easily check that
\begin{eqnarray}
&&P_{ij}^k =P_{ji}^k+f_{ij}^k, \;\; P_{ii}^k=-f_{ik}^i.
\end{eqnarray}
A straightforward computation using (\ref{LB})
and Koszul formula gives 
\begin{eqnarray}\label{NAi}
\nabla_{e_{i}} e_{j} &=& P_{ij}^ke_k-\sigma_{ij}e_{n+1}\;\;\;{\rm  for\; any\; } i, j=1,2, \cdots, n,\\\label{NAs}
\nabla_{e_{n+1}} e_{n+1} &=& \sum_{i=1}^n\kappa_ie_i.
\end{eqnarray}

The mean curvature vector field of the fibers of the Riemannian  submersion $\phi$ is given by
\begin{eqnarray}\label{MC}
\mu=(\nabla_{e_{n+1}}e_{n+1})^{\mathcal{H}}=\sum_{i=1}^n\kappa_ie_i.
\end{eqnarray}
A straightforward computation using (\ref{LB}), (\ref{NAi}), (\ref{NAs}) and (\ref{MC}) yields
\begin{eqnarray}\notag
 && \sum_{i=1}^n\big\{\nabla^{M}_{e_{i}}(\nabla^{M}_{e_{i}}\mu)^{\mathcal{H}}\big\}^{\mathcal{H}}=\big\{\sum^{n}_{i,j=1}\nabla^{M}_{e_i}\Big(e_i(\kappa_j)+\sum^{n}_{l=1}\kappa_lP^{j}_{il}\Big)e_j\big\}^{\mathcal{H}}\\\notag
&&=\sum^{n}_{i,k=1}e_i\Big(e_i(\kappa_k)+\sum^{n}_{j=1}\kappa_jP^{k}_{ij}\Big)e_k+\sum^{n}_{i,j=1}\Big(e_i(\kappa_j)+\sum^{n}_{l=1}\kappa_lP^{j}_{il}\Big)P_{ij}^ke_k\\\label{GD20}
&&=\sum^{n}_{k=1}\Big(\sum^{n}_{i=1}e_ie_i(\kappa_k)+2\sum^{n}_{i,j=1}e_i(\kappa_j)P^{k}_{ij}+\sum^{n}_{i,j=1}\kappa_j(e_i P^{k}_{ij})+\sum^{n}_{i,j=1}\kappa_jP^{l}_{ij}P_{il}^k\Big)e_k\\
&&  \big\{- \nabla^{M}_{(\nabla^{M}_{e_i}e_i)^{\mathcal{H}}}\mu\big\}^{\mathcal{H}}=-\sum_{i,k,j=1}^n\big( P_{ii}^j(e_j\kappa_k)+\kappa_j P_{ii}^lP_{lj}^k  \big)e_k\\
&&  [\mu, (\nabla^{M}_{e_i}e_i)^{\mathcal{V}}]=0,\; {\rm since}\; (\nabla^{M}_{e_{i}}e_{i})^{\mathcal{V}}=0,\\
&&  [[\mu,e_{n+1}],e_{n+1}] =\sum_{k=1}^n e_{n+1}(e_{n+1}\,\kappa_k)e_k\\
&&  [\mu,(\nabla^{M}_{e_{n+1}}e_{n+1})^{\mathcal{V}}]=0,\; {\rm since}\; (\nabla^{M}_{e_{n+1}}e_{n+1})^{\mathcal{V}}=0,\\
&& \big\{-\nabla^{M}_{\mu}\mu \big\}^{\mathcal{H}}=-\sum_{i,k=1}^n \kappa_i\Big(e_i(\kappa_k)+\sum_{j=1}^n \kappa_jP_{ij}^k )\Big)e_k\\
&& {\rm Ricci}^N(\rm d\phi(\mu))=\sum_{k=1}^n{\rm Ricci}^N(\rm d\phi(\mu), {\rm d}\phi(e_k)){\rm d}\phi(e_k).
\end{eqnarray}

Using these and Theorem \ref{MT} we conclude that the Riemannian submersion $\phi$ is biharmonic if and only if

\begin{eqnarray}\notag
&&\sum^{n}_{k=1}\Big(\sum^{n}_{i=1}e_ie_i(\kappa_k)+2\sum^{n}_{i,j=1}e_i(\kappa_j)P^{k}_{ij}+\sum^{n}_{i,j=1}\kappa_j(e_i P^{k}_{ij})+\sum^{n}_{i,j=1}\kappa_jP^{l}_{ij}P_{il}^k\\\notag
&&  -\sum_{i,j=1}^n\big( P_{ii}^j(e_j\kappa_k)+\kappa_j P_{ii}^lP_{lj}^k \big) + e_{n+1}(e_{n+1}\,\kappa_k)-\sum_{i=1}^n \kappa_ie_i(\kappa_k)-\sum_{i,j=1}^n \kappa_i\kappa_jP_{ij}^k \\\notag
&&+{\rm Ricci}^N\big(\rm d\phi(\mu), {\rm d}\phi(e_k)\big)\Big){\rm d}\phi(e_k)=0.
\end{eqnarray}
This is equivalent to 
\begin{eqnarray}\notag
&&\sum^{n+1}_{i=1}e_ie_i(\kappa_k) -\sum_{i=1}^n P_{ii}^j(e_j\kappa_k) -\sum_{i=1}^n \kappa_ie_i(\kappa_k)\\\label{GD30}
&&+\sum^{n}_{i,j=1}\Big(2e_i(\kappa_j)P^{k}_{ij}+\kappa_j(e_i P^{k}_{ij})+\kappa_jP^{l}_{ij}P_{il}^k -\kappa_jP_{ij}^k - \kappa_j P_{ii}^lP_{lj}^k\Big)\\\notag
&&+{\rm Ricci}^N(\rm d\phi(\mu), {\rm d}\phi(e_k))=0, \;\;k=1,2,\ldots, n
\end{eqnarray}
since $\{{\rm d}\phi(e_k)|\;k=1,2,\ldots, n\}$ is an orthonormal frame on $\phi^{-1}TN$.
Finally, a straightforward computation of the Laplacian $\Delta\kappa_k$ shows that Equation (\ref{GD30}) is equivalent to the following
\begin{eqnarray}\notag
&&\Delta\kappa_k+\sum^{n}_{i,j=1}\Big(2e_i(\kappa_j)P^{k}_{ij}+\kappa_j\big[ e_i (P^{k}_{ij})+P^{l}_{ij}P_{il}^k -\kappa_iP_{ij}^k - P_{ii}^lP_{lj}^k\big] \Big)\\\notag
&&+{\rm Ricci}^N(\rm d\phi(\mu), {\rm d}\phi(e_k))=0, \;\;k=1,2,\ldots, n,
\end{eqnarray}
from which we obtain the theorem.
\end{proof}

\begin{remark}
One can check that when $n=2$,  the integrability data is given by
\begin{eqnarray}\notag
f_{12}^1&=& f_1,\; f_{12}^2=f_2,\\\notag
P^{2}_{11}&=&-f_1, \; P^{2}_{21}=-f_2, \; P^{1}_{12}=f_1, \; P^{1}_{22}=f_2,\; {\rm all\;other}\; P_{ij}^k=0.
\end{eqnarray}
Substituting this and ${\rm Ricci}^N(\rm d\phi(\mu), {\rm d}\phi(e_k))=\kappa_k{\rm K}^N$ into Equation (\ref{1DE}), we obtain that a Riemannian
submersion $\phi:(M^{3},g)\longrightarrow (N^2,h)$  is biharmonic if and only if
\begin{eqnarray}
&&\Delta\kappa_1+(e_1\kappa_2)f_1+(e_2\kappa_2)f_2+e_1(\kappa_2  f_1)+e_2(\kappa_2 f_2)\\\notag
&& -\kappa_1\kappa_2f_1-(\kappa_2)^2f_2 -\kappa_1(-{\rm K}^N+f_1^2+ f_2^2)=0,\\
&&\Delta\kappa_1-(e_1\kappa_1)f_1-(e_2\kappa_1)f_2-e_1(\kappa_1  f_1)-e_2(\kappa_1 f_2)\\\notag
&& +\kappa_1\kappa_2f_2+(\kappa_1)^2f_1 -\kappa_2(-{\rm K}^N+f_1^2+ f_2^2)=0.
\end{eqnarray}
This is exactly Proposition 2.1 in \cite{WO} of which our Theorem \ref{1D} gives a generalization.
\end{remark}

\begin{corollary}\label{Twisted}
 Let $g_0$ be the standard Euclidean metric on $\r^n$. Then, the Riemannian submersion defined by the projection of the twisted product
\begin{align}\notag
\phi  : ( \mathbb{R}^{n} \times \mathbb{R} , g_0 +e^{2\lambda(x,t)} dt^2) &\to (\mathbb{R}^n ,g_0),\;\; \phi(x,t) =x
\end{align}
 is biharmonic if and only if its tension field viewed as a map 
 \begin{equation}
 \tau(\phi):( \r^n \times \mathbb{R} , g_0+e^{2\lambda(x,t)}  dt^2) \longrightarrow\r^n, \;\tau(\phi)(x,t)=(\kappa_1(x,t),\kappa_2(x,t),\ldots, \kappa_n(x,t))
 \end{equation}
  is harmonic, where $\tau(\phi)=\sum^n_{i=1}\kappa_i(x,t){\rm d}\phi(e_i)$ is the tension field of the Riemannian submersion with respect to an adapted frame.
 \end{corollary}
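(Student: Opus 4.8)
The plan is to read this off directly from Theorem \ref{1D}, since the map is precisely a Riemannian submersion with a $1$-dimensional fiber whose total space is $(\r^n\times\r,\, g_0+e^{2\lambda(x,t)}dt^2)$ and whose base is the flat Euclidean space $(\r^n,g_0)$. First I would set up the natural adapted orthonormal frame: take $e_i=\partial/\partial x_i$ for $i=1,\dots,n$ as the horizontal fields and $e_{n+1}=e^{-\lambda}\partial/\partial t$ as the unit vertical field. Since each $\partial/\partial x_i$ is $\phi$-related to the standard orthonormal frame on the base, this frame is adapted to $\phi$, and the integrability data $\{f^k_{ij},\kappa_i,\sigma_{ij}\}$ can be read off from (\ref{LB}).

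The key observation is that the base is flat and carries a \emph{commuting} coordinate frame: $[\varepsilon_i,\varepsilon_j]=[\partial/\partial x_i,\partial/\partial x_j]=0$, so every structure constant $F^k_{ij}$, and hence every $f^k_{ij}=F^k_{ij}\circ\phi$, vanishes. Consequently $P^k_{ij}=\tfrac12(-f^j_{ik}-f^i_{jk}+f^k_{ij})=0$ for all $i,j,k$. Moreover, flatness of $(\r^n,g_0)$ gives ${\rm Ricci}^N\equiv 0$, so the curvature term in (\ref{1DE}) drops out as well. (A one-line computation from (\ref{LB}) also yields $\kappa_i=-\partial\lambda/\partial x_i$, but this explicit form is not needed.)

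With all $P^k_{ij}=0$ and ${\rm Ricci}^N=0$, the biharmonic system (\ref{1DE}) collapses to
\begin{equation}\notag
\Delta\kappa_k=0,\qquad k=1,2,\dots,n,
\end{equation}
where $\Delta$ is the Laplacian of the total space $(\r^n\times\r,\, g_0+e^{2\lambda(x,t)}dt^2)$. The final step is to recognize this as a harmonicity statement: a map $F=(F_1,\dots,F_n)$ from a Riemannian manifold into Euclidean $\r^n$ is harmonic if and only if each component $F_k$ is harmonic with respect to the domain Laplacian, since the target connection is trivial and $\tau(F)^k=\Delta F_k$. Applying this to $\tau(\phi)\colon(\r^n\times\r,\,g_0+e^{2\lambda}dt^2)\to\r^n$ with components $\kappa_k$ (the overall sign arising from $\tau(\phi)=-(m-n){\rm d}\phi(\mu)$, $m-n=1$, is harmless for harmonicity), the system $\Delta\kappa_k=0$ is exactly the condition that $\tau(\phi)$ be harmonic, which establishes the claimed equivalence.

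I do not anticipate a genuine obstacle: the corollary is a clean specialization of Theorem \ref{1D}. The only points requiring care are checking that the chosen frame is genuinely adapted, and observing that the flatness of the base simultaneously annihilates the integrability data $f^k_{ij}$ (hence all $P^k_{ij}$) and the Ricci term, so that the surviving equations are the componentwise Laplace equations $\Delta\kappa_k=0$. Matching these to the standard characterization of harmonic maps into Euclidean space, with the Laplacian taken on the twisted total space exactly as in Theorem \ref{1D}, then completes the argument.
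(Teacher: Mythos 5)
Your proposal is correct and follows essentially the same route as the paper: choose the adapted frame $e_i=\partial/\partial x_i$, $e_{n+1}=e^{-\lambda}\partial/\partial t$, observe that all integrability data $f^k_{ij}$ (hence all $P^k_{ij}$) vanish and that ${\rm Ricci}^N=0$ on the flat base, so Theorem \ref{1D} reduces to $\Delta\kappa_k=0$, which is the harmonicity of $\tau(\phi)$ as a map into Euclidean space. No gaps; this matches the paper's proof step for step.
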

\begin{proof}
In this case, we can check that the orthonormal frame  
$\{e_i=\frac{\partial}{\partial x_i}|\, i=1,\ldots, n,\,e_{n+1}= e^{-\lambda}\frac{\partial}{\partial t}\} $  is adapted to
the Riemannian submersion $\phi$ with $ d\phi(e_i)=\varepsilon_i, i=1,
2,...,n$ and $e_{n+1}$ being vertical, where
$\varepsilon_i=\frac{\partial}{\partial x_i}, i=1,2,...,n$ form an orthonormal frame on the
base space $(\mathbb{R}^n ,g_0)$.  An easy  computation gives
 \begin{align}\notag
 [e_i,e_j]=0,\ \  i,j=1,2,...,n,\;\; [e_i,e_{n+1}]=\kappa_ie_{n+1},
\end{align}
where $\kappa_i=-\lambda_{x_i}$. It follows that the integrability data in this case are given by  
\begin{align*}
f_{ij}^k=\sigma_{ij}=0,\;\; \forall\; i,j,k=1,2,\ldots, n.
\end{align*}
It follows that
\begin{equation}\label{111}
\begin{array}{lll}
P_{ij}^{k}=\sigma_{ij}=0 ,\;\; \forall\; i,j,k=1,2,\ldots, n.
\end{array}
\end{equation}

Using these and Theorem \ref{1D} we conclude that the Riemannian submersion defined by the twisted product onto its first factor is biharmonic  if and only if
\begin{equation}\label{Delta}
\Delta \kappa_i=0,\forall\; i=1,2,\ldots, n.
\end{equation}
This means exactly that the tension field of the Riemannian submersion viewed as a map 
 \begin{equation}
 \tau(\phi):( \r^n \times \mathbb{R} , g_0+e^{2\lambda(x,t)}  dt^2) \longrightarrow\r^n, \;\tau(\phi)(x,t)=(\kappa_1,\kappa_2,\ldots, \kappa_n)
 \end{equation}
  is harmonic.
 \end{proof}

\begin{remark}
(i) We would like to point out that the mean curvature vector field of the Riemannian submersion studied in Corollary \ref{Twisted}  is $\mu=-\sum_{i=1}^n\frac{\partial \lambda(x, t)}{\partial x_i}e_i$, which is not  a basic vector field.\\
(ii) One can also check that
\begin{align}\label{correction}
\Delta_{M}\kappa_i&=\sum^{n}_{j=1}\frac{\partial^{2} \kappa_i}{\partial {x_j}^{2}}+e^{-2\lambda}\frac{\partial^2\kappa_i}{\partial {t^2}}-\sum^{n}_{j=1}\kappa_j\frac{\partial\kappa_i}{\partial x_j}-e^{-2\lambda}\frac{\partial \lambda}{\partial t}\frac{\partial \kappa_i}{\partial t}.
\end{align}
\end{remark}

\begin{ack}
Both authors would like to thank C. Oniciuc for some useful comments and questions that help to clarify some statements of  the original manuscript. The second author also wants to thank Changyou Wang for some constructive discussions and sharing his insight on Equation (\ref{Einstein}) in the paper and Fred Wilhelm for some useful communication on constructions of Riemannian submersions.
\end{ack}

\end{document}